\newcommand{\R}{\mathbb{R}}
\newcommand{\E}{\mathbb{E}}
\newcommand{\inr}[2]{\langle #1, #2 \rangle}
\newcommand{\pdiff}[2]{\frac{\partial #1}{\partial #2}}
\newcommand{\pdiffII}[3]{\ifthenelse{\equal{#2}{#3}}
{\frac{\partial^2 #1}{\partial #2^2}}
{\frac{\partial^2 #1}{\partial #2 \partial #3}}
}
\newcommand{\diffII}[3]{\ifthenelse{\equal{#2}{#3}}
{\frac{d^2 #1}{d #2^2}}
{\frac{d^2 #1}{d #2 d #3}}
}
\newcommand{\diff}[2]{\frac{d #1}{d #2}}
\newcommand{\grad}{\nabla}
\renewcommand{\Pr}{\text{Pr}}
\newtheorem{theorem}{Theorem}[section]
\newtheorem{lemma}[theorem]{Lemma}
\newtheorem{corollary}[theorem]{Corollary}
\newtheorem{proposition}[theorem]{Proposition}
\newtheoremstyle{example}{\topsep}{\topsep}%
     {}
     {}
     {\bfseries}
     {}
     {\newline}
     {\thmname{#1}\thmnumber{ #2}\thmnote{ #3}}
\theoremstyle{example}
\newcommand{\I}{\mathcal{I}}
\title{A multidimensional version of noise stability}
\author{Joe Neeman}
\begin{document}
\maketitle
\begin{abstract}
 We give a multivariate generalization of Borell's noise stability
 theorem for Gaussian vectors.
 As a consequence we recover two inequalities,
 also due to Borell, for exit times of the Ornstein-Uhlenbeck process.
\end{abstract}

\section{Introduction}
There has been a recent flurry of activity in
probability~\cite{MoOdOl:10,Mossel:10} and computer science~\cite{KKMO:04,
KindlerOdonnell:12,Raghavendra:08}
around a certain paper of Borell~\cite{Borell:85} on inequalities
satisfied by the Ornstein-Uhlenbeck process.
In his paper, Borell proved a theorem, which is somewhat complicated to
state, showing that certain quantities only decrease under
Ehrhard symmetrization~\cite{Ehrhard:84}. He then derived two simpler
corollaries, about hitting times for the Ornstein-Uhlenbeck process,
from this general result.

We recall that the Ornstein-Uhlenbeck process on $\R^n$ is the Gaussian process
$\{X_t: t \in \R\}$ with mean zero and covariance $\E X_s X_t^T = e^{|t-s|} I_n$.
This is a Markov process, as may be seen by the construction
$X_t = e^{-t} B_{e^{2t}}$ for a Brownian motion $B_t$, and the stationary
measure of $X_t$ is the standard Gaussian measure, $\gamma_n$.
For a set $A \subset \R^n$, we denote its exit time under $X_t$ by
$e_A = \inf \{t \ge 0: X_t \not \in A\}$.
Although they were originally written in terms of hitting times instead of
exit times, Borell's two corollaries of his general inequality may be
written as follows, in which \emph{half-space} means a set of the form
$\{x \in \R^n : x\cdot a \le b\}$, and half-spaces are \emph{parallel} if they
have the same normal vector.
\begin{theorem}[Borell]\label{thm:exit-time}
If $B \subset \R^n$ is a half-space with $\gamma_n(B) = \gamma_n(A)$
then $e_B$ stochastically dominates $e_A$; i.e., for every $t \ge 0$,
\[
 \Pr(e_A \ge t) \le \Pr(e_B \ge t).
\]
\end{theorem}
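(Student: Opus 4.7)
The plan is to derive Theorem~\ref{thm:exit-time} from the multidimensional version of Borell's noise stability inequality (the main theorem of the paper) via time--discretization of the Ornstein--Uhlenbeck paths.

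First I would reduce the continuous exit--time event to a discrete one. After a standard inner-regularity reduction to the case of closed $A$ (using that each marginal $X_s$ has a density, so boundaries of Borel sets contribute nothing at any fixed time), continuity of OU paths gives
\[
\Pr(e_A \ge t) \;=\; \Pr\!\bigl(X_s \in A \text{ for all } s \in [0,t]\bigr) \;=\; \lim_{k\to\infty} \Pr\!\bigl(X_{jt/k} \in A \text{ for all } 0 \le j \le k\bigr),
\]
and the limit is monotone along a dyadic refinement of the partition (refining only adds constraints). Because $B$ is also closed, the same identity holds for $B$ in place of $A$.

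Next, for each $k$ the vector $(X_0, X_{t/k}, \ldots, X_t)$ is a centered Gaussian on $(\R^n)^{k+1}$ whose covariance blocks are $e^{-|s_i-s_j|}I_n$. I would apply the multidimensional Borell inequality to this Gaussian vector with the \emph{same} set $A$ in every coordinate. The multidimensional theorem then allows us to replace $A$ at every coordinate simultaneously by the parallel half-space of equal Gaussian measure; since all the original sets agree, the parallel half-spaces can (and must) be taken identical, namely equal to $B$. This yields
\[
\Pr\!\bigl(X_{jt/k} \in A \text{ for all } j\bigr) \;\le\; \Pr\!\bigl(X_{jt/k} \in B \text{ for all } j\bigr),
\]
and taking $k\to\infty$ on both sides gives Theorem~\ref{thm:exit-time}.

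The real work lies entirely in the multidimensional noise stability theorem itself; given that theorem, the present corollary is mechanical. The only subtleties at this level are (i) the path-continuity/regularity reduction that turns the continuous event into a monotone limit of finite-dimensional ones, and (ii) verifying that the statement of the multidimensional theorem permits the optimizing half-spaces to be chosen parallel (in fact identical) when the input sets are identical---a feature that should be built into the theorem, since reflecting all coordinates along a common normal preserves the OU covariance structure.
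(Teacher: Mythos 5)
Your proposal follows essentially the same route as the paper: discretize the path at times $jt/k$, note that the covariance matrix $(e^{-|s_i-s_j|}I_n)$ has nonnegative entries and is positive semidefinite, apply the multivariate theorem with all $B_i$ equal to the same half-space $B$ (which the theorem's statement permits directly, since a single repeated half-space is a collection of parallel half-spaces of the right measures), and pass to the limit. Your handling of the limit on the $B$ side -- a monotone dyadic refinement, using that $B$ is closed and the paths are continuous -- is a clean and valid alternative to the paper's argument, which instead bounds the discrete probability by $\Pr(e_{B_\epsilon}\ge t)$ for an $\epsilon$-enlargement and then lets $\epsilon\to 0$ by monotone convergence.

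The one step you should drop is the ``inner-regularity reduction to the case of closed $A$.'' As stated it is shaky: replacing $A$ by a closed subset of almost-full relative measure decreases $\Pr(e_A\ge t)$, which is the wrong direction for an upper bound, and the parenthetical about boundaries is false for general Borel sets (the boundary of $\mathbb{Q}^n$ is all of $\R^n$). More importantly, it is unnecessary: on the $A$ side you never need the limit to \emph{equal} $\Pr(e_A\ge t)$; you only need
\[
\Pr(e_A\ge t)\;\le\;\Pr\bigl(X_{jt/k}\in A \text{ for all } j\bigr),
\]
which holds for every measurable $A$ because the continuous-time event is contained in each discrete event. Equality in the limit is required only for $B$, where closedness is automatic. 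With that correction the argument is complete and matches the paper's.
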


\begin{theorem}[Borell]\label{thm:occupation-time}
If $B_1$ and $B_2$ are parallel half-spaces with 
$\gamma_n(B_i) = \gamma_n(A_i)$ then
\[
 \E \int_0^{t \land e_{A_1}} 1_{A_2}(X_s)\ ds
 \le \E \int_0^{t \land e_{B_1}} 1_{B_2}(X_s)\ ds.
\]
\end{theorem}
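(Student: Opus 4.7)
The plan is to deduce Theorem~\ref{thm:occupation-time} from the paper's multidimensional noise stability theorem by time discretization, in the same spirit as one would prove Theorem~\ref{thm:exit-time}. By Fubini,
\[
\E \int_0^{t \wedge e_{A_1}} 1_{A_2}(X_s)\,ds = \int_0^t \E\bigl[1_{\{X_u \in A_1 \text{ for all } u \in [0,s]\}}\,1_{A_2}(X_s)\bigr]\,ds.
\]
After a standard inner approximation reducing to the case where $A_1$ is open, path-continuity of $X$ lets me write each event on the right as a decreasing intersection of cylinder events along a refining sequence of partitions. For a partition $\Pi = \{0 = s_0 < s_1 < \cdots < s_N = t\}$ of $[0,t]$, I would use the Riemann sum
\[
S_\Pi(A_1, A_2) = \sum_{k=0}^{N-1} (s_{k+1} - s_k)\, \E\Bigl[1_{A_2}(X_{s_k}) \prod_{j=0}^{k} 1_{A_1}(X_{s_j})\Bigr],
\]
which converges to the left-hand side as $|\Pi| \to 0$ by bounded convergence.

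Each summand in $S_\Pi$ is a Gaussian probability: it equals $\P\bigl((X_{s_0}, \ldots, X_{s_k}) \in C_k\bigr)$ for a product set $C_k$ built from $A_1$ and $A_2$, where $(X_{s_0}, \ldots, X_{s_k})$ is a centered Gaussian vector in $(\R^n)^{k+1}$ whose block covariance is determined by the Ornstein--Uhlenbeck kernel. I would apply the paper's multidimensional noise stability theorem to each such summand, obtaining an upper bound given by the analogous probability with $A_1$ and $A_2$ replaced by parallel half-spaces $B_1, B_2$ of matching Gaussian measures. Summing over $k$ preserves the inequality, so $S_\Pi(A_1, A_2) \le S_\Pi(B_1, B_2)$; taking $|\Pi| \to 0$ on both sides finishes the argument.

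The step I expect to be the main obstacle is this application of the multidimensional theorem: it must produce extremal sets that are parallel half-spaces with a common normal direction \emph{independent} of $k$ and of $\Pi$, so that the per-summand bounds reassemble into a single $S_\Pi(B_1, B_2)$ and the limit goes through. Morally this is exactly what one expects from the fact that Ehrhard-type symmetrization acts along a single direction, but in practice the main theorem needs to be stated with enough uniformity --- over the number $k+1$ of time points involved and over the admissible OU covariance matrices --- to supply this conclusion directly. The remaining issues (the open-set approximation of $A_1$, bounded convergence for the Riemann sums, and exchanging $|\Pi|\to 0$ with the outer time integral) are technical but routine.
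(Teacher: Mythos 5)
Your overall strategy---discretize the occupation time into a Riemann sum of cylinder probabilities, apply Theorem~\ref{thm:main} to each summand, and pass to the limit---is exactly the paper's route. But there is a genuine gap in the step where you invoke Theorem~\ref{thm:main}. Your summand is
\[
\Pr\bigl(X_{s_0}\in A_1,\dots,X_{s_{k-1}}\in A_1,\ X_{s_k}\in A_1\cap A_2\bigr),
\]
i.e.\ the product set is $A_1\times\cdots\times A_1\times(A_1\cap A_2)$. Theorem~\ref{thm:main} therefore bounds it by the analogous probability for parallel half-spaces of measures $\gamma_n(A_1),\dots,\gamma_n(A_1),\gamma_n(A_1\cap A_2)$ --- \emph{not} by the corresponding summand of $S_\Pi(B_1,B_2)$, whose last set is $B_1\cap B_2$ with $\gamma_n(B_1\cap B_2)=\min(\gamma_n(A_1),\gamma_n(A_2))$, which can strictly exceed $\gamma_n(A_1\cap A_2)$. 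So the per-summand bounds do not ``reassemble into $S_\Pi(B_1,B_2)$'' as written. The fix is easy but must be said: either observe that the half-space of measure $\gamma_n(A_1\cap A_2)$ is contained in $B_1\cap B_2$ (equivalently, that the extremal bound $J$ is monotone in each argument), or do what the paper does and first reduce to the case $A_2\subset A_1$ by moving the mass of $A_2\setminus A_1$ into $A_1$ (handling $\gamma_n(A_2)>\gamma_n(A_1)$ separately via Theorem~\ref{thm:exit-time}). Without one of these, the chain of inequalities is broken.

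By contrast, the step you single out as the ``main obstacle'' --- uniformity of the half-space direction over $k$ and $\Pi$ --- is a non-issue. The upper bound produced by Theorem~\ref{thm:main} is $J(\gamma_n(A_1),\dots;M)$, which depends only on the measures and on $M$; since the covariance has the isotropic form $M\otimes I_n$, \emph{every} collection of parallel half-spaces with the prescribed measures attains this value, so you are free to take the same fixed direction (that of the given $B_1,B_2$) in every summand and every partition. The remaining limit arguments are indeed of the same routine character as in the proof of Theorem~\ref{thm:exit-time}, and the paper likewise omits them.
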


There is a third corollary of Borell's general inequality that did not appear
in his original paper~\cite{Borell:85}, but has nevertheless become widely
applied in theoretical computer science, particularly in the study
of hardness of approximation.

\begin{theorem}[Borell]\label{thm:noise-sense}
If $B_1$ and $B_2$ are parallel half-spaces with 
$\gamma_n(B_i) = \gamma_n(A_i)$ then for any $t > 0$,
\[
 \Pr(X_0 \in A_1, X_t \in A_2) \le \Pr(X_0 \in B_1, X_t \in B_2).
\]
\end{theorem}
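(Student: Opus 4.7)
The statement is the classical correlated-Gaussian form of Borell's noise stability inequality, and my approach is to derive it as the two-set instance of the multidimensional noise stability theorem that the paper establishes.

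First, I would pass from the Ornstein-Uhlenbeck formulation to that of a pair of correlated Gaussians via the Mehler representation. If $Z \sim \gamma_n$ is independent of $X_0 \sim \gamma_n$ and we set $\rho = e^{-t} \in (0,1]$, then conditionally on $X_0$,
\[
 X_t \eqD \rho X_0 + \sqrt{1-\rho^2}\, Z.
\]
Thus $(X_0, X_t)$ is a centered Gaussian vector in $\R^{2n}$ with marginals $\gamma_n$ and cross-covariance $\E X_0 X_t^T = \rho I_n$, and the quantity of interest can be rewritten as
\[
 \Pr(X_0 \in A_1, X_t \in A_2) = \int_{\R^n} 1_{A_1}(x)\,(T_\rho 1_{A_2})(x)\, d\gamma_n(x),
\]
where $T_\rho$ is the Gaussian noise operator; the analogous formula holds for the $B_i$.

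Next, I would invoke the multidimensional noise stability theorem of this paper with $k=2$ and the covariance above: among all measurable $A_1, A_2 \subset \R^n$ with prescribed $\gamma_n$-measures, the functional $\E[1_{A_1}(Y_1)\,1_{A_2}(Y_2)]$ is maximized when each $A_i$ is replaced by a half-space $B_i$ of the same Gaussian measure, subject to all the $B_i$ sharing a common normal direction---that is, being parallel. Substituting the $B_i$ gives the desired inequality directly.

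The main obstacle is of course proving the multidimensional theorem itself; the reduction above is formal. A secondary point is to verify that ``parallel half-spaces'' in the sense of the present statement coincides with the class of optimizers for the $k=2$ case of the general theorem (which it must, by rotational symmetry and the form of the covariance), and that indicators of Borel sets are admissible test functions, which should follow by a standard approximation. Modulo the main theorem, Theorem~\ref{thm:noise-sense} is then a one-line corollary.
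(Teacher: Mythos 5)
Your proposal is correct and matches the paper's own treatment: the paper observes that $(X_0, X_t)$ is a centered Gaussian vector with covariance $\bigl(\begin{smallmatrix} 1 & \rho \\ \rho & 1\end{smallmatrix}\bigr) \otimes I_n$, $\rho = e^{-t} \ge 0$, and obtains Theorem~\ref{thm:noise-sense} as the $k=2$ case of Theorem~\ref{thm:main}. The reduction is exactly as you describe, with all the real work residing in the multidimensional theorem.
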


In the special case $A_1 = A_2$, this inequality is sometimes interpreted
as showing that half-spaces are the most ``noise stable'' sets. Here,
we think of $X_t$ as being a noisy version of $X_0$, and so a set $A$ is
noise stable if the event $\{X_0 \in A\}$ tends to agree with the
event $\{X_t \in A\}$. Theorem~\ref{thm:noise-sense} implies that this
overlap is maximized, over all sets with a fixed Gaussian measure, by
half-spaces. Using an invariance principle, Mossel et al.~\cite{MoOdOl:10}
deduced from Theorem~\ref{thm:noise-sense} a similar inequality on
the discrete cube (although the statement on the cube is necessarily more
complicated, because the direction of a half-space's normal vector becomes
important); that work then laid the foundation for many applications in
theoretical computer science (for a few examples, see~\cite{KKMO:04,KindlerOdonnell:12,Raghavendra:08}).

Note that in Theorem~\ref{thm:noise-sense}, the joint distribution of
$(X_0, X_t)$ has mean zero and covariance
$\big(\begin{smallmatrix} 1 & \rho \\ \rho & 1\end{smallmatrix}\big) \otimes I_n$,
where $\rho = e^{-t}$.
Our main result is a multivariate generalization of Theorem~\ref{thm:noise-sense},
which allows for more than two Gaussian vectors and a more general
covariance structure than that endowed by the
Ornstein-Uhlenbeck process. Our generalization is also strong enough to recover
Theorems~\ref{thm:exit-time} and~\ref{thm:occupation-time}; we are thankful
to Michel Ledoux for pointing this out.

 \begin{theorem}\label{thm:main}
  Let $M = (m_{ij})$
  be a $k \times k$ positive semidefinite matrix with $m_{ij} \ge 0$,
  and let $X = (X_1, \dots, X_k)$ be a $kn$-dimensional Gaussian vector
  with covariance $M$.
  For any measurable $A_1, \dots, A_k \subset \R^n$,
  \begin{equation}\label{eq:main}
   \Pr(X_i \in A_i \text{ for all } i)
   \le \Pr(X_i \in B_i \text{ for all } i)
  \end{equation}
  whenever $B_i$ is a collection of parallel half-spaces with
  $\gamma_n(B_i) = \gamma_n(A_i)$.
  
  Moreover, if equality is attained in~\eqref{eq:main} then there exists
  a collection $(B_1, \dots, B_k)$ of parallel half-spaces such that
  $A_i = B_i$ up to sets of measure zero.
 \end{theorem}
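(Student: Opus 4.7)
Our plan is to extend the semigroup-interpolation proof of the $k=2$ case (Mossel--Neeman) to the multivariate setting. By continuity of both sides of~\eqref{eq:main} and by rescaling, reduce to the case where $M$ is strictly positive definite with $m_{ii}=1$ for all $i$; by mollification, assume each $A_i$ is open with smooth boundary. The central auxiliary object is the function $J\colon[0,1]^k\to[0,1]$ defined by
\[
 J(p_1,\dots,p_k)\;=\;\P\bigl(W_i\le\Phi^{-1}(p_i)\text{ for all } i\bigr),\qquad W\sim N(0,M).
\]
Two features of $J$ are crucial: on the corners $\{0,1\}^k$, $J$ agrees with the product $\prod_i p_i$, so plugging in indicator values reproduces the left-hand side of~\eqref{eq:main}; and $J(\gamma_n(A_1),\dots,\gamma_n(A_k))$ equals the right-hand side. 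Writing $f_i=1_{A_i}$ and letting $P_t$ be the standard Ornstein--Uhlenbeck semigroup on $\R^n$, define
\[
 F(t)\;=\;\E\bigl[J\bigl(P_tf_1(X_1),\dots,P_tf_k(X_k)\bigr)\bigr].
\]
Then $F(0)$ is the LHS and $\lim_{t\to\infty}F(t)$ is the RHS of~\eqref{eq:main}, so it suffices to prove $F'(t)\ge 0$ for all $t>0$.

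Differentiating using $\partial_tP_tf=LP_tf$ and carefully applying Gaussian integration by parts with respect to the joint density of $X=(X_1,\dots,X_k)$ (covariance $M\otimes I_n$), the Laplacian and drift contributions from $L=\Delta-x\cdot\grad$ combine (the $m_{ii}=1$ reduction is what makes them cancel cleanly), and with $g_i=P_tf_i$ one obtains
\[
 F'(t)\;=\;-\sum_{i,j}m_{ij}\,\E\bigl[\,\partial_{ij}J(g(X))\,\langle\grad g_i(X_i),\grad g_j(X_j)\rangle\,\bigr].
\]
This reduces the theorem to a pointwise linear-algebra assertion: the symmetric Hadamard product $M\circ\Hess J(p)$ is \emph{negative} semi-definite for every $p\in(0,1)^k$. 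Indeed, if so, then for each coordinate $a=1,\dots,n$ the vector $V_a=\bigl((\grad g_i)_a(X_i)\bigr)_{i=1}^{k}\in\R^k$ satisfies $V_a^{\top}(M\circ\Hess J(p))V_a\le 0$, and summing over $a$ inside the expectation gives $F'(t)\ge 0$.

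Establishing this pointwise inequality is the main obstacle. Setting $a_i=\Phi^{-1}(p_i)$ and $H(a)=\P(W_i\le a_i\,\forall i)$ so that $J(p)=H(a)$, and using the conditional representation
\[
 \P\bigl(W_\ell\le a_\ell\,\forall\ell\neq i\,\big|\,W_i=a_i\bigr)\;=\;\P\bigl(Z_\ell\le a_\ell-m_{\ell i}a_i\,\forall\ell\neq i\bigr)
\]
for $Z\sim N(0,M_{-i}-m_{-i}m_{-i}^{\top})$, a direct chain-rule calculation produces the key identity
\[
 \sum_{\ell=1}^{k}m_{i\ell}\,\phi\bigl(\Phi^{-1}(p_\ell)\bigr)\,\partial_{i\ell}J(p)\;=\;0\qquad(i=1,\dots,k),
\]
where $\phi$ is the standard Gaussian density. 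Thus $M\circ\Hess J(p)$ annihilates the strictly positive vector $D(p)=\bigl(\phi(\Phi^{-1}(p_i))\bigr)_{i=1}^{k}$. One also checks directly that $\partial_{ij}J\ge 0$ for $i\neq j$ (it is a Gaussian density times a conditional probability), and the identity above, combined with the hypothesis $m_{ij}\ge 0$, forces $\partial_{ii}J\le 0$. Consequently $-M\circ\Hess J(p)$ is a symmetric $Z$-matrix (non-negative diagonal, non-positive off-diagonal) with a strictly positive null vector; the standard theory of symmetric $M$-matrices then makes it positive semi-definite. Both hypotheses $m_{ij}\ge 0$ and $M\succeq 0$ are used essentially at this step.

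For the equality case, $F'(t)\equiv 0$ forces $V_a$ to lie in the null space of $-M\circ\Hess J(g(X))$ for almost every $X$ and every $t>0$. Assuming $M$ is irreducible (otherwise the argument applies block by block after partitioning $\{1,\dots,k\}$ into communicating components), Perron--Frobenius makes this null space one-dimensional, so each $V_a$ is proportional to $D(g(X))$; unraveling shows that $\grad(\Phi^{-1}\circ g_i)(X_i)$ is independent of $i$, and since the $i$-th such expression depends only on $X_i$, it must equal a single constant vector $c\in\R^n$. Hence $P_tf_i(x)=\Phi(c\cdot x+d_i)$ for some $d_i\in\R$, and letting $t\to 0$ (with $|c|\to\infty$) identifies each $A_i$, up to measure zero, with a half-space of common normal direction.
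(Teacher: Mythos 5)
Your proposal is correct and follows essentially the same route as the paper: a semigroup interpolation reducing the theorem to the negative semidefiniteness of $M \odot H_J$, which you establish via the same key fact (the positive vector $\big(\phi(\Phi^{-1}(p_i))\big)_i$ lies in the kernel of $M \odot H_J$ while its off-diagonal entries are nonnegative --- equivalent to the paper's zero-row-sum matrix $A$ after conjugating by the diagonal matrix $\I$), followed by the same kernel analysis and Carlen--Kerce-type endpoint for the equality case. The only substantive difference is that you flag the irreducibility issue in the equality characterization (distinct blocks could a priori carry different normal directions), a degenerate situation the paper passes over silently.
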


 By setting $k = 2$, 
 Theorem~\ref{thm:main} recovers Theorem~\ref{thm:noise-sense}.
 We should remark that a generalization along these lines,
 but nevertheless different from Theorem~\ref{thm:main}, has already
 been discovered: Isaksson and Mossel~\cite{IsakssonMossel:12} showed that
 the inequality~\eqref{eq:main} also holds
 under the hypothesis that the off-diagonal elements of $M^{-1}$
 are non-positive. In other words, our hypothesis is that every pair
 $X_i, X_j$ is positively correlated, while~\cite{IsakssonMossel:12} assumed
 that every pair $X_i, X_j$ is conditionally positively correlated given
 all the other variables. Neither of these conditions is strictly stronger than
 the other, and in fact both would suffice for recovering
 Theorems~\ref{thm:exit-time} and~\ref{thm:occupation-time}.
 However,~\cite{IsakssonMossel:12} did not characterize the equality cases
 of~\eqref{eq:main} under their hypothesis.
 
 Next, we will show how Theorem~\ref{thm:main} may be used to recover
 Theorem~\ref{thm:exit-time}. This reduction is quite similar to one by
 Burchard and Schmuckenschlager~\cite{BurchardSchmuckenschlager:01}, who
 were studying exit times of Brownian motion on manifolds. (In that case,
 the study of exit times has a fairly long history;
 see~\cite{BurchardSchmuckenschlager:01} for references.) As we will see, though,
 our approach to Theorem~\ref{thm:main} is quite different to that
 of Burchard and Schmuckenschlager, who studied two-point symmetrizations.
 
 Let $X_t$ be the Ornstein-Uhlenbeck process,
 and consider the finite dimensional marginal
 $(X_{t_1}, \dots, X_{t_k})$ for a sequence of times $t_1 < \cdots < t_k$.
 This is a mean-zero Gaussian vector with covariance $M \otimes I_n$, where
 $m_{ij} = e^{-|t_i - t_j|}$. Clearly, then, $M$ satisfies the hypothesis
 of Theorem~\ref{thm:main} and in particular, we have
 \begin{equation}\label{eq:discrete-hitting-time}
  \Pr(X_{t_i} \in A \text{ for all } i) \le \Pr(X_{t_i} \in B \text{ for all } i)
 \end{equation}
 when $B$ is a half-space with $\gamma_n(A) = \gamma_n(B)$.
 This is essentially a discrete version of Theorem~\ref{thm:exit-time},
 since the event $\{X_{t_i} \in A \text{ for all } i\}$ is a discretization
 of $\{e_A \ge t_k\}$.
 
 To complete the proof, we need to show that one can
 take limits.
 Setting $t_i = it/k$ in~\eqref{eq:discrete-hitting-time}, we have
 \begin{align*}
 \Pr(e_A \ge \tau)
 &= \Pr(X_t \in A \text{ for all } 0 < t < \tau) \\
 &\le \Pr(X_{i\tau/k} \in A \text{ for all } i = 1, \dots, k) \\
 &\le \Pr(X_{i\tau/k} \in B \text{ for all } i = 1, \dots, k),
 \end{align*}
 where the last inequality follows from~\eqref{eq:discrete-hitting-time}.
 Next, we send $k \to \infty$. Recall that $X_t$ is uniformly continuous
 with probability 1. Hence for any $\epsilon > 0$, we make take $k$ large enough
 so that $|s - t| \le 1/k$ implies $|X_s - X_t| \le \epsilon$; for this $k$,
 \[
 \Pr(X_{i\tau/k} \in B \text{ for all } i = 1, \dots, k) \le
 \Pr(X_t \in B_\epsilon \text{ for all } 0 < t < \tau)
 = \Pr(e_{B_{\epsilon}} \ge \tau),
 \]
 where $B_\epsilon$ is the $\epsilon$-enlargement of $B$:
 $B_\epsilon = \{x \in R^n: d(x, B) \le \epsilon\}$. We have shown,
 therefore, that for any $\epsilon > 0$,
 $\Pr(e_A \ge \tau) \le \Pr(e_{B_\epsilon} \ge \tau)$. It only remains to
 show, then, that $\Pr(e_{B_\epsilon} \ge \tau)$ converges to
 $\Pr(e_B \ge \tau)$ as $\epsilon \to 0$.
 
 Consider instead the equivalent statement that $\Pr(e_{B_\epsilon} < \tau)$ converges
 to $\Pr(e_B < \tau)$.
 Since $B$ is closed and $X_t$ has continuous paths,
 $e_B < \tau$ implies that there is some $\epsilon > 0$ with
 $e_{B_\epsilon} < \tau$. That is, the function $1_{\{e_{B_\epsilon} < \tau\}}$
 converges pointwise (and upwards) to $1_{\{e_B < \tau\}}$ as $\epsilon \to 0$.
 By the monotone convergence theorem, it follows that $\Pr(e_{B_\epsilon} < \tau)$
 converges to $\Pr(e_B < \tau)$ as $\epsilon \to 0$. Hence
 \[
  \Pr(e_A \ge \tau) \le \lim_{\epsilon \to 0} \Pr(e_{B_\epsilon} \ge \tau)
  = \Pr(e_B \ge \tau)
 \]
 and so we have recovered Theorem~\ref{thm:exit-time}.
 
 We have mentioned already that it is also possible to recover
 Theorem~\ref{thm:exit-time}
 from the result in~\cite{IsakssonMossel:12}. Indeed, the matrix $M$ with
 entries $m_{ij} = e^{-|t_i - t_j|}$ does satisfy the hypothesis
 in~\cite{IsakssonMossel:12} (namely that the off-diagonal entries of its
 inverse are non-positive), although this is certainly less obvious
 then the fact that $M$ satisfies the conditions of Theorem~\ref{thm:main}.
 
 Let us also indicate how Theorem~\ref{thm:occupation-time} is recovered.
 We want to show that
 \begin{equation}\label{eq:occupation}
  \E \int_0^{t\land e_{A_1}} 1_{A_2}(X_s)\ ds
 \end{equation}
 is only increased when $A$ is replaced by $B$ (recall that $B_1$ and $B_2$ are
 parallel half-spaces satisfying $\gamma_n(B_i) = \gamma_n(A_i)$.
 We may suppose that $A_2 \subset A_1$, since if not then~\eqref{eq:occupation}
 may be trivially made larger by moving some of $A_2$'s mass inside $A_1$;
 if this is impossible because $\gamma_n(A_2) > \gamma_n(A_1)$
 then~\eqref{eq:occupation} is trivially bounded by $t \land e_{A_1}$,
 which, by Theorem~\ref{thm:exit-time}, is stochastically dominated
 by $t \land e_{B_1}$, and this in turn is equal to the right hand side
 of~\eqref{eq:occupation} with $B$ replacing $A$.
 
 Now that we have reduced to the case $A_2 \subset A_1$, we may
 discretize~\eqref{eq:occupation} as
 \[
  \frac{1}{k} \sum_{i=1}^k \Pr(X_{t_j} \in A_1 \text{ for all } j < i \text{ and } X_{t_i} \in A_2).
 \]
 By~\eqref{eq:discrete-hitting-time}, this is only increased when $A$ is
 replaced by $B$. To recover Theorem~\ref{thm:occupation-time} from here,
 it suffices to take a limit in much the same manner as before; we omit
 the details.

 \section{The Ornstein-Uhlenbeck semigroup}
 
 We will prove Theorem~\ref{thm:main} by differentiating a particular
 functional under the Ornstein-Uhlenbeck semigroup.
 This proof method has a long history, beginning with Varopoulos'
 work~\cite{Varopoulos:85} connecting the heat semigroup with Sobolev
 inequalties. More recently, and more apropos of this work, Bakry
 and Ledoux~\cite{BakryLedoux:96} proved the Gaussian isoperimetric inequality
 by differentiating Bobkov's functional~\cite{Bobkov:97} under the
 Ornstein-Uhlenbeck semigroup. We will follow quite a similar approach here,
 using a generalization of a functional that was introduced by Mossel
 and the author~\cite{MosselNeeman:12b} to prove Theorem~\ref{thm:main} in the case
 $k=2$.
 
 We define the Ornstein-Uhlenbeck semigroup $\{P_t: t \ge 0\}$, which
 acts on functions $f: \R^n \to \R^n$, by
 \[
  (P_t f)(x) = \int_{\R^n} f(e^{-t} x + \sqrt{1 - e^{-2t}}y)\ d\gamma_n(y),
 \]
 where $\gamma_n$ is the standard Gaussian measure on $\R^n$.
 Equivalently, $(P_t f)(X_0) = \E (f(X_t) \mid X_0)$, where $\{X_t: t \ge 0\}$
 is the Ornstein-Uhlenbeck process from the previous section. From either
 definition, one can easily see that $P_0$ is the identity operator, while
 $P_t f \to \E f$ as $t \to \infty$.

 One remarkable property of the Ornstein-Uhlenbeck semigroup is that
 it has very nice formulas for its commutation with smooth functions.
 In particular, for any $F = (f_1, \dots, f_k): \R^n \to \R^k$
 and for any smooth $\Psi: \R^k \to \R$,
 there is the formula (see, eg., ~\cite{Ledoux:00})
 \begin{equation}\label{eq:diffusion}
  \diff{}{s} P_s \Psi(P_{t-s} F)
  = \sum_{i,j = 1}^k \partial^2_{ij} \Psi(P_{t-s} F) \inr{\grad P_{t-s} f_i}
  {\grad P_{t-s} f_j}.
 \end{equation}

 We begin with an observation that comes, essentially, from
 applying~\eqref{eq:diffusion} to the function that is $\Psi$ composed
 with a linear operator. In the following, $\odot$ denotes the
 Hadamard (elementwise) product between two matrices and $H_\Psi$ denotes
 the Hessian matrix of $\Psi$.
 \begin{proposition}\label{prop:linear-diffusion}
  Let $M$ be a $k \times k$ positive-definite matrix with $m_{ii} = 1$ and let
  $X = (X_1, \dots, X_k)$ be a $kn$-dimensional Gaussian vector with
  mean zero and covariance $M \otimes I_n$. If $\Psi: [0, 1]^k \to \R$ satisfies
  $M \odot H_\Psi \le 0$ then for all measurable
  $F = (f_1, \dots, f_k): \R^n \to [0, 1]^k$,
  \[
   \E \Psi(F(X)) \le \Psi(\E F).
  \]
 \end{proposition}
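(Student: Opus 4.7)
The plan is to embed the correlated Gaussian setup into a standard Gaussian on $\R^{kn}$ so that the commutation identity~\eqref{eq:diffusion} applies directly. Write $M = AA^T$ and represent $X = (A \otimes I_n) Y$ with $Y$ standard Gaussian on $\R^{kn}$. Define $\tilde F = (\tilde f_1, \dots, \tilde f_k) : \R^{kn} \to \R^k$ by $\tilde f_i(z) = f_i((Az)_i)$, where $(Az)_i = \sum_j a_{ij} z_j \in \R^n$, so that $\E \Psi(F(X)) = \E \Psi(\tilde F(Y))$.

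A direct Gaussian integration, using crucially that $\sum_j a_{ij}^2 = m_{ii} = 1$, gives $(P_t \tilde f_i)(z) = (P_t f_i)((Az)_i)$, where $P_t$ denotes the Ornstein-Uhlenbeck semigroup in the appropriate dimension. Differentiating in $z$, the $j$-th $\R^n$-block of $\grad_z P_t \tilde f_i(z)$ equals $a_{ij} h_i$, where $h_i = \grad(P_t f_i)\big((Az)_i\big)$. Summing over $j$ gives
\[
 \inr{\grad P_t \tilde f_i}{\grad P_t \tilde f_{i'}}_{\R^{kn}} = m_{ii'} \inr{h_i}{h_{i'}}_{\R^n}.
\]
Substituting into~\eqref{eq:diffusion} applied to $\Psi \circ \tilde F$ on $\R^{kn}$,
\[
 \diff{}{s} P_s \Psi(P_{t-s} \tilde F)(z) = \sum_{i,i'} m_{ii'} \partial^2_{ii'} \Psi(P_{t-s} \tilde F(z)) \inr{h_i}{h_{i'}},
\]
which is the Frobenius inner product of $M \odot H_\Psi$ with the Gram matrix $G = (\inr{h_i}{h_{i'}})_{i,i'}$. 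Since $G \succeq 0$ and $M \odot H_\Psi \preceq 0$ by hypothesis, the standard fact that $\tr(CG) \le 0$ for $C \preceq 0, G \succeq 0$ (proved by writing $G = BB^T$) forces this derivative to be non-positive. Hence $s \mapsto P_s \Psi(P_{t-s} \tilde F)(z)$ is non-increasing, and integrating from $0$ to $t$ yields $P_t \Psi(\tilde F)(z) \le \Psi(P_t \tilde F)(z)$.

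Evaluating this at $z = 0$ and letting $t \to \infty$, the left side tends to $\E \Psi(\tilde F(Y)) = \E \Psi(F(X))$ while the right side tends to $\Psi(\E F)$, yielding the claim. The main technical obstacle is regularity, since~\eqref{eq:diffusion} presupposes smooth arguments whereas $F$ is only assumed measurable. I would handle this by first replacing $F$ with the smoothed version $P_\epsilon F$, which is smooth and still takes values in $[0,1]^k$, proving the inequality for this smooth function, and then sending $\epsilon \downarrow 0$ using bounded convergence together with the continuity of $\Psi$ on the compact cube $[0,1]^k$; a similar mollification addresses any lack of smoothness of $\Psi$.
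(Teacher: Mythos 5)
Your proposal is correct and follows essentially the same route as the paper: both realize $X$ as a linear image of a standard Gaussian on $\R^{kn}$ (the paper uses the symmetric square root $Q$ of $M\otimes I_n$, you use a general factorization $M=AA^T$), exploit $m_{ii}=1$ to commute $P_t$ with the linear map, reduce the derivative in~\eqref{eq:diffusion} to the pairing of $M\odot H_\Psi$ with a positive semidefinite Gram matrix, and conclude by letting $t\to\infty$. Your explicit treatment of the regularity/mollification issue is a welcome addition that the paper leaves implicit, but it does not change the substance of the argument.
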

 
 We remark that the assumption $m_{ii} = 1$ in
 Proposition~\ref{prop:linear-diffusion} is not necessary, but it makes our
 notation simpler since otherwise we need to consider Ornstein-Uhlenbeck semigroups
 with different stationary measures.
 
 Before proving Proposition~\ref{prop:linear-diffusion}, we introduce
 some notation that will be useful in what follows:
 for any $f: \R^n \to \R$
and any $n \times m$ matrix $M$, denote the function $f \circ M : \R^m \to \R$
by $f^M$.

\begin{proof}
Let $Q = (q_{ij})$ be the positive definite square root of $M \otimes I_n$,
and for $i = 1, \dots, k$, let $Q_i$ be the $n \times kn$ matrix
consisting of rows $(i-1)n + 1$ through $in$ of $Q$.
Let $Z$ be a standard Gaussian vector in $\R^{kn}$, and 
note that
$Q Z = (Q_1 Z, \dots, Q_k Z)$ is a $kn$-dimensional Gaussian vector with
mean 0 and covariance $M \otimes I_n$ (i.e., $QZ$ has the same distribution as $X$).
We consider the quantity
\[
 F(s, t, z) = \left(P_s \Psi(P_{t-s} f_1^{Q_1}, \dots, P_{t-s} f_k^{Q_k})\right)(z)
\]
for $s, t \in [0, \infty)$ and $z \in \R^{kn}$.
First, let us check how $P_t$ commutes with linear transformations.
Since $Q \otimes I_n$ is the square root of $M \otimes I_n$,
we have $Q_i^T Q_i = I_n$ and so
\begin{align}
 (P_t f_i^{Q_i})(x)
 &= \int_{\R^n} f_i(e^{-t} Q_i x + \sqrt{1-e^{-2t}} Q_i y)\: d\gamma_{kn}(y) \notag \\
 &= \int_{\R^n} f_i(e^{-t} Q_i x + \sqrt{(1-e^{-2t})} y)\: d\gamma_n(y) \notag \\
 &= (P_t f_i)^{Q_i}(x).
 \label{eq:Pt-linear}
\end{align}
Of course, the gradient commutes with linear transformations as
$\grad f^A = A^T (\grad f)^A$.
Combining this with~\eqref{eq:Pt-linear},
\[
\grad P_{t-s} f_i^{Q_i} = \grad (P_{t-s} f_i)^{Q_i}
= Q_i^T (\grad P_{t-s} f_i)^{Q_i}.
\]
In particular,
\begin{align}
\inr{\grad P_{t-s} f_i^{Q_i}}{\grad P_{t-s} f_j^{Q_j}}
&= \inr{Q_i^T (\grad P_{t-s} f_i)^{Q_i}}
{Q_j^T (\grad P_{t-s} f_j)^{Q_i}} \notag \\
&= m_{ij} \inr{(\grad P_{t-s} f_i)^{Q_i}}{(\grad P_{t-s} f_j)^{Q_i}}.
\label{eq:grad-linear}
\end{align}
For brevity, let $v_i = (\grad P_{t-s} f_i)^{Q_i}$.
Then, by~\eqref{eq:diffusion} and~\eqref{eq:grad-linear},
\begin{align}
 \pdiff{F(s,t,\cdot)}{s}
 &= P_s \sum_{i,j=1}^k
 \inr{\grad P_{t-s} f_i^{Q_i}}{\grad P_{t-s} f_j^{Q_j}}
 \partial^2_{ij} \Psi(P_{t-s} F^Q) \notag \\
 &= P_s \sum_{i,j=1}^k
 m_{ij} \inr{v_i}{v_j}
 \partial^2_{ij} \Psi(P_{t-s} F^Q).
\label{eq:exchangeable-derivative}
\end{align}
Note that if $v^T = (v_1^T \dots v_k^T)$ then the last line may be rewritten as 
\begin{equation}\label{eq:exchangeable-derivative-2}
P_s  \left(v^T ((M \odot H_\Psi) \otimes I_n) v\right).
\end{equation}
In particular, if $M \odot H_\Psi \le 0$ then
 $\pdiff{F(s,t,z)}{s} \le 0$ for every $s, t$ and $z$. Hence,
 $\lim_{t \to \infty} F(t,t,Z) \le \lim_{t\to\infty} F(0,t,Z)$. But since
 $(Q_1 Z, \dots, Q_k Z)$ has the same distribution as $(X^{(1)}, \dots, X^{(k)})$,
 $\E F(t,t,Z)$ converges to $\E \Psi(F(X))$
 and $\E F(0,t,Z)$ converges to $\Psi(\E F(X))$.
 \end{proof}
 
 With hardly any extra effort, the proof of Proposition~\ref{prop:linear-diffusion}
 also allows us to characterize its equality cases. Indeed, if
 $\E \Psi(F) = \Psi(\E F)$ then we must have $\pdiff{F(s, t, z)}{s} = 0$ for
 every $s$, $t$, and $z$. Going back to~\eqref{eq:exchangeable-derivative}
 and~\eqref{eq:exchangeable-derivative-2}, we see that
 $P_s \left(v^T ((M \odot H_\Psi) \otimes I_n) v\right)$ must be
 identically zero, and hence
 $((M \odot H_\Psi) \otimes I_n) v = 0$. In other words, we have
 the following corollary:
 \begin{corollary}\label{cor:linear-diffusion}
  Under the hypothesis of Proposition~\ref{prop:linear-diffusion}, if
  $\E \Psi(F) = \Psi(\E F)$ then for every $t > 0$,
  \[
   \big((M \odot H_\Psi(P_t F^Q)) \otimes I_n\big)
   \begin{pmatrix}
    (\grad P_t f_1)^{Q_1}\\
    \vdots \\
    (\grad P_t f_k)^{Q_k}
   \end{pmatrix}
   = 0 \text{ pointwise.}
  \]
 \end{corollary}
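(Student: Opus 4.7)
The plan is to extract the equality case by squeezing the monotone derivative argument from the proof of Proposition~\ref{prop:linear-diffusion}. The first step I would take is to observe that, because the standard Gaussian measure on $\R^{kn}$ is invariant under every $P_s$, the expectation
\[
\E F(s,t,Z) = \E(P_s\Psi(P_{t-s}F^Q))(Z) = \E\Psi(P_{t-s}F^Q)(Z)
\]
depends only on $u := t-s$; call this quantity $g(u)$. The proof of Proposition~\ref{prop:linear-diffusion} shows $s \mapsto F(s,t,z)$ is non-increasing, so $g$ is non-decreasing on $[0,\infty)$, with $g(0) = \E\Psi(F)$ and $g(\infty) = \Psi(\E F)$. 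Under the equality hypothesis $\E\Psi(F) = \Psi(\E F)$, a non-decreasing function with equal endpoints must be constant, so $g'\equiv 0$, which translates into $\partial_s \E F(s,t,Z) = 0$ for every $0 \le s \le t$ and every $t > 0$.

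Next, I would plug this back into the pointwise derivative formula~\eqref{eq:exchangeable-derivative-2}, namely
\[
\partial_s F(s,t,z) = \bigl(P_s\bigl(v^T((M \odot H_\Psi(P_{t-s}F^Q)) \otimes I_n) v\bigr)\bigr)(z),
\]
where $v$ denotes the column vector with $n$-dimensional blocks $v_i = (\nabla P_{t-s} f_i)^{Q_i}$. Integrating both sides over $z$ against $\gamma_{kn}$ and once more using that $P_s$ preserves the Gaussian expectation, the integrand $v^T((M \odot H_\Psi(P_{t-s}F^Q))\otimes I_n)v$ integrates to zero. Since this integrand is pointwise non-positive by the assumption $M \odot H_\Psi \le 0$, it must vanish $\gamma_{kn}$-almost everywhere, and in fact pointwise because for $t-s > 0$ the smoothing effect of $P_{t-s}$ renders both $\nabla P_{t-s} f_i$ and $P_{t-s} F^Q$ continuous in $z$.

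Finally, specializing to $s = 0$ and an arbitrary $t > 0$ reads off the corollary: $v^T A v = 0$ pointwise, with $v_i = (\nabla P_t f_i)^{Q_i}$ and $A := (M \odot H_\Psi(P_t F^Q)) \otimes I_n$ symmetric negative semidefinite. The standard fact that $v^T A v = 0$ forces $A v = 0$ for such $A$ (write $-A = B^T B$, so $\|Bv\|^2 = 0$ implies $Bv = 0$ and hence $Av = 0$) then completes the argument. The one step that I would expect to require the most care is the passage from almost-everywhere to pointwise vanishing of the integrand, but this follows cleanly from the smoothing property of $P_t$ for $t > 0$ together with the smoothness of $\Psi$.
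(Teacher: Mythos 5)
Your proof is correct and follows essentially the same route as the paper: deduce from the equality of endpoints and the pointwise non-positivity of $\partial_s F(s,t,z)$ that this derivative vanishes, feed that back into~\eqref{eq:exchangeable-derivative-2}, and use that $v^TAv=0$ forces $Av=0$ for negative semidefinite $A$. You are in fact somewhat more careful than the paper, which asserts $\partial_s F(s,t,z)=0$ without spelling out the reparametrization $g(u)=\E\Psi(P_uF^Q)(Z)$ or the almost-everywhere-to-pointwise step that your smoothing argument supplies.
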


 \section{Proof of Theorem~\ref{thm:main}}
 
 Before proving Theorem~\ref{thm:main}, note that by translating $X$,
 $A_i$, and $B_i$, it suffices to consider the
 case in which $X$ has mean zero.
 Moreover, by scaling each $X_i$, $A_i$, and $B_i$, we may assume
 that $m_{ii} = 1$ for each $i$ (here and in the previous sentence
 we are using the fact that a collection of parallel half-spaces
 remains one under translation and scaling).
 We may also assume that $M$ is strictly positive definite, since if not then
 the distribution of $X$ is supported on a subspace, onto which we may project.
 
 Consider the function
 \begin{equation}\label{eq:def-J}
  J(x_1, \dots, x_k; M)
  = \Pr\left(X_i \le \Phi^{-1}(x_i) \text{ for all } i\right).
 \end{equation}
 Note that $\Pr(X_i \le \Phi^{-1}(x_i)) = x_i$; in particular, the collection
 $(B_1, \dots, B_k)$ with $B_i = \{y \in \R^n: y_1 \le x_i\}$ is a set of
 parallel half-spaces with $\Pr(X_i \in B_i) = x_i$. Since every such set of
 parallel half-spaces may be obtained by applying a fixed rotation to each
 $B_i$, Theorem~\ref{thm:main} is equivalent to the statement
 \[
  \Pr(X_i \in A_i \text{ for all } i) \le J(\gamma_n(A_1), \dots, \gamma_n(A_k); M).
 \]
 Next, note that if $x_1, \dots, x_k \in \{0, 1\}$ then
 $J(x_1, \dots, x_k)$ is 1 if all the $x_i$ are 1, and 0 otherwise. In
 particular,
 \[
  \Pr(X_i \in A_i \text{ for all } i) = \E J(1_{A_1}(X_1), \dots, 1_{A_k}(X_k); M)
 \]
 and hence~\eqref{eq:main} is equivalent to the statement
 \begin{equation}\label{eq:functional}
  \E J(f_1(X_1), \dots, f_k(X_k); M) \le J(\E f_1, \dots, \E f_k; M)
 \end{equation}
 in the special case $f_i = 1_{A_i}$. In fact, we will prove~\eqref{eq:functional}
 for general measurable functions $f_i: \R^n \to [0, 1]$.

 Unsurprisingly, the proof of~\eqref{eq:functional} goes through
 Proposition~\ref{prop:linear-diffusion}. The main task left, therefore,
 is to compute the Hessian of $J$ and check that it satisfies the hypothesis
 of Proposition~\ref{prop:linear-diffusion}.

 \begin{proposition}\label{prop:hess-J}
  If $m_{ii} = 1$ and $m_{ij} \ge 0$
  then $J$ satisfies $M \odot H_J \le 0$.
 \end{proposition}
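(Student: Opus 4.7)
The plan is to compute the Hessian of $J$ explicitly in terms of the underlying Gaussian density and conditional probabilities, and then verify the inequality $u^\top(M \odot H_J)\, u \le 0$ by an algebraic factorization as a weighted sum of squares. In particular, I will not try to analyze $H_J$ abstractly: I will write out the entries.

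First I would change variables by setting $y_i = \Phi^{-1}(x_i)$ and writing $J(x; M) = G(y; M)$, where $G(y; M) = \Pr(X_i \le y_i \text{ for all } i)$ is the joint CDF of $X$. Since $dy_i / dx_i = 1/\phi(y_i)$, a direct chain-rule computation gives $\partial_i J = \Pr(X_l \le y_l \text{ for all } l \ne i \mid X_i = y_i)$, and for $i \ne j$,
\[
 \partial_{ij}^2 J = \frac{\phi_{ij}(y_i, y_j)}{\phi(y_i)\phi(y_j)} \cdot \Pr\bigl(X_l \le y_l \text{ for all } l \notin \{i,j\} \bigm| X_i = y_i,\, X_j = y_j\bigr),
\]
where $\phi_{ij}$ is the bivariate Gaussian density of $(X_i, X_j)$. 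In particular, every off-diagonal $\partial_{ij}^2 J$ is non-negative.

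To handle the diagonal entries I would exploit the conditional Gaussian structure: conditional on $X_i = y_i$, the vector $(X_l)_{l \ne i}$ is a Gaussian with mean $(m_{il} y_i)_{l \ne i}$ and covariance independent of $y_i$. Hence $\partial_i J$ can be written as $\bar F(y_{-i} - m_{-i,i} y_i)$ for a centered Gaussian CDF $\bar F$, and differentiating this composition in $x_i$ (applying the chain rule once more to each argument) produces the key identity
\[
 \phi(y_i)\, \partial_i^2 J = -\sum_{j \ne i} m_{ij}\, \phi(y_j)\, \partial_{ij}^2 J.
\]
This step is where I expect the main obstacle to lie: each diagonal Hessian entry must be expressed as a weighted negative sum of the off-diagonal ones, with coefficients that happen to be exactly the off-diagonal entries $m_{ij}$. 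Keeping track of the $1/\phi(y_i)$ factors from the chain rule and the conditional densities so that the identity lands in precisely this form is the delicate part.

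Finally, I would substitute into $u^\top(M \odot H_J)\, u = \sum_i u_i^2\, \partial_i^2 J + \sum_{i \ne j} m_{ij}\, u_i u_j\, \partial_{ij}^2 J$ (using $m_{ii}=1$). Using the key identity to eliminate the diagonal terms in favor of the off-diagonal ones, pairing the $(i,j)$ and $(j,i)$ contributions, and completing the square, one obtains
\[
 u^\top(M \odot H_J)\, u = -\sum_{i < j} m_{ij}\, \partial_{ij}^2 J \left( \sqrt{\tfrac{\phi(y_j)}{\phi(y_i)}}\, u_i - \sqrt{\tfrac{\phi(y_i)}{\phi(y_j)}}\, u_j \right)^{\!2}.
\]
Because $m_{ij} \ge 0$ by hypothesis and $\partial_{ij}^2 J \ge 0$ by the formula above, each summand is non-positive, so $M \odot H_J \le 0$. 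As a byproduct one also reads off exactly when the quadratic form vanishes on a given $u$, which is useful for the equality analysis later.
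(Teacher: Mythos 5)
Your proposal is correct and follows essentially the same route as the paper: the same first-derivative formula via Gaussian conditioning, the same key identity expressing each diagonal entry of the Hessian as a negative $m_{ij}$-weighted combination of the (non-negative) off-diagonal entries, and the same sum-of-squares identity for the quadratic form (the paper packages this by factoring $M \odot H_J = \mathcal{I} A \mathcal{I}$ with $A$ having non-negative off-diagonal entries and zero row sums, which after the substitution $v_i = u_i/\phi(y_i)$ is exactly your completed square). Your closing remark about reading off when the quadratic form vanishes is likewise how the paper handles the equality case.
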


 For a vector $v$, let $v_{\hat i}$ denote $v$ without the $i$th entry,
 and for a square matrix $M$, let $M_{\hat i}$ denote $M$ without the $i$th row
 and column.
 For a square matrix $M$, let $\overline{M_i}$ denote the Schur complement of
 $M_{\hat i}$ in $M$. In other words, $\overline{M_i}$ is defined by
 \[\overline{M_i}^{-1} = (M^{-1})_{\hat i}.\]
 A well-known formula for conditional distributions
 of Gaussian vectors~\cite{Eaton:83} states if $X$ has mean zero and covariance
 $M$ satisfying $m_{ii} = 1$,
 then conditioned on $X_i = x_i$, $X_{\hat i}$ has mean $x_i M_{i \hat i}$
 (where $M_i$ denotes the $i$th row of $M$, so $M_{i\hat i}$ is the $i$th
 row of $M$ with its $i$th element removed) and covariance
 $\overline{M_i}$.
 
 To compute the first derivatives of $J$, let
 \[
  K(x_1, \dots, x_k; M) = \Pr\left(X_i \le x_i \text{ for all } i\right),
 \]
 and note that
 $J(x_1, \dots, x_k) = K(\Phi^{-1}(x_1), \dots, \Phi^{-1}(x_k))$.
  Now, for any $i$ we may write $K$ as
 \[
  K(x; M) = \int_0^{x_i} \phi(y)
  \Pr(X_j \le x_j \text{ for all } j \ne i \mid X_i = y)\ dy,
 \]
 from which we see that
 \[
  \partial_i K(x; M)
  = \phi(x_i) \Pr(X_j \le x_j \text{ for all } j \ne i \mid X_i = x_i).
 \]
 Now, given that $X_i = x_i$, $X_{\hat i}$ has mean $M_{i\hat i} x_i$
 and covariance $\overline{M_i}$; hence,
 \[
  \Pr(X_j \le x_j \text{ for all } j \ne i \mid X_i = x_i)
  = K(x_{\hat i} - M_{i\hat i} x_i; \overline{M_i}),
 \]
 and so we have the formula
 \[
  \partial_i K(x; M) = \phi(x_i) K(x_{\hat i} - M_{i\hat i} x_i; \overline{M_i})
 \]
 (bear in mind that this formula is only valid under the assumption $m_{ii} = 1$;
 if not then $m_{ii}$ makes an appearance in the formula also).
 Applying the chain rule and the identity
 $\diff{}{x} \Phi^{-1}(x) = 1/\phi(\Phi^{-1}(x))$, we have
 \begin{equation}\label{eq:diff_J}
  \partial_i J(x; M)
  = K(\Phi^{-1}(x_{\hat i}) - M_{i\hat i} \Phi^{-1}(x_i); \overline{M_i})
 \end{equation}
 (where by $\Phi^{-1}(x_{\hat i})$, we mean the vector
 obtained by applying $\Phi^{-1}$ to $x_{\hat i}$ element-wise.)
 Now let $I(x) = \phi(\Phi^{-1}(x))$ and define, for $j \ne i$,
 \[
  J_{ij}(x; M) = I(x_i) \partial_j K(\Phi^{-1}(x_{\hat i}) - M_{i\hat i} \Phi^{-1}(x_i); \overline{M_i});
 \]
 by the chain rule applied to~\eqref{eq:diff_J}, we have
 \begin{align}
  \partial_i \partial_j J(x; M) 
  &= \frac{1}{I(x_j)}
  \partial_j K(\Phi^{-1}(x_{\hat i}) - M_{i\hat i} \Phi^{-1}(x_i); \overline{M_i}) \notag \\
  &= \frac{1}{I(x_i) I(x_j)} J_{ij}(x; M).
  \label{eq:2diff-J-mixed}
 \end{align}
 It is worth mentioning that this last equation shows that in fact
 $J_{ij} = J_{ji}$. This is not obvious from the definition of $J_{ij}$, although
 it may also be checked by the tedious process of calculating the derivative
 in that definition.
 
 To compute the repeated second derivatives of $J$, we use~\eqref{eq:diff_J}
 and the chain rule to write
 \begin{align}
  \partial_i^2 J(x; M)
  &= -\sum_{j \ne i} \frac{m_{ij}}{I(x_i)} \partial_j K(\Phi^{-1}(x_{\hat i}) - M_{i\hat i} \Phi^{-1}(x_i); \overline{M_i}) \notag \\
  &= -\frac{1}{I^2(x_i)} \sum_{j \ne i} m_{ij} J_{ij}(x; M).
  \label{eq:2diff-J-diag}
 \end{align}
 Now let $\I(x)$ be the $k \times k$ matrix with $1/I(x_i)$ as the $i$th
 diagonal entry.
 Note that by~\eqref{eq:2diff-J-mixed}, the $ij$ entry of
 $M \odot H_J$ is given by $\frac{m_{ij}}{I(x_i) I(x_j)} J_{ij}$,
 while the $ii$ entry of $M \odot H_J$ is just given
 by~\eqref{eq:2diff-J-diag} (since $m_{ii} = 1$).
 Hence we may write
 \begin{equation}\label{eq:hess-J}
  M \odot H_J(x; M) = \I(x) A(x) \I(x),
 \end{equation}
 where $a_{ij} = m_{ij} J_{ij}$ and $a_{ii} = - \sum_{j \ne i} a_{ij}$.
 
 \begin{lemma}\label{lem:neg-def}
  If $A$ is a matrix such that $a_{ij} \ge 0$ for $i \ne j$ and
  $a_{ii} = -\sum_{j \ne i} a_{ij}$ then $A \le 0$.
 \end{lemma}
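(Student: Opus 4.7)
The plan is to recognize that $-A$ is precisely a (symmetric) weighted graph Laplacian: its off-diagonal entries $-a_{ij}$ are non-positive, and each row sums to zero by the defining identity $a_{ii}=-\sum_{j\ne i}a_{ij}$. Graph Laplacians are well-known to be positive semidefinite, so $A\le 0$. I would just verify this by a one-line direct calculation rather than invoking a black box.

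Concretely, first I would note (or observe from the context in which the lemma is applied) that $A$ is symmetric, since $a_{ij}=m_{ij}J_{ij}$, $m_{ij}=m_{ji}$, and $J_{ij}=J_{ji}$ was established just above equation~\eqref{eq:2diff-J-mixed}. (If one wants the lemma in full generality without assuming symmetry, one can replace $A$ by $(A+A^T)/2$, which preserves both hypotheses and the quadratic form.)

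Next I would fix an arbitrary $v\in\R^k$ and expand
\[
v^T A v \;=\; \sum_{i} a_{ii}v_i^2 \;+\; \sum_{i\ne j} a_{ij} v_i v_j.
\]
Substituting $a_{ii}=-\sum_{j\ne i}a_{ij}$ for the diagonal terms and using symmetry to rewrite $\sum_{i\ne j}a_{ij}v_i^2 = \tfrac{1}{2}\sum_{i\ne j}a_{ij}(v_i^2+v_j^2)$, the quadratic form collapses into the clean identity
\[
v^T A v \;=\; -\tfrac{1}{2}\sum_{i\ne j}a_{ij}(v_i-v_j)^2.
\]
Since $a_{ij}\ge 0$ for $i\ne j$, every term on the right is non-positive, so $v^TAv\le 0$, which is the desired conclusion.

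There is no real obstacle here; the only thing to keep track of is the symmetry of $A$ and the bookkeeping in the sum-of-squares rearrangement. It is worth noting, for the eventual application to the equality characterization in Theorem~\ref{thm:main}, that this identity also pins down when $v^TAv=0$: namely, $v_i=v_j$ whenever $a_{ij}>0$, i.e.\ $v$ is constant on the connected components of the graph whose edges are the pairs $\{i,j\}$ with $a_{ij}>0$. This refinement will likely be useful downstream, even though it is not part of the stated lemma.
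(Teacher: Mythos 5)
Your proof is correct and is essentially the paper's own argument: the paper likewise writes the quadratic form as $v^TAv=-\sum_{i<j}a_{ij}(v_i-v_j)^2$ (equivalent to your $-\tfrac12\sum_{i\ne j}$ version), mentions the diagonal-dominance/Laplacian viewpoint as an alternative, and records the same kernel observation for later use in the equality case. Your explicit handling of the symmetry of $A$ is a small but welcome extra care that the paper leaves implicit.
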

 
 \begin{proof}
  In fact, the proof follows immediately from some well-known facts in linear
  algebra, such as the fact that $-A$ is diagonally dominant. However,
  we may also give a simple proof by noting that the quadratic form of $A$ is
  nothing but
  \[
   v^T A v = - \sum_{i<j} a_{ij} (v_i - v_j)^2 \le 0.
  \]
  This formula also leads us to the observation that if all of the $a_{ij}$
  are strictly positive (which is the case for the matrix $A$
  in~\eqref{eq:hess-J}) then the kernel of $A$ is the span of the all-ones
  vector. This observation is irrelevant to Lemma~\ref{lem:neg-def}, but
  we will use it later.
 \end{proof}

 Combining Lemma~\ref{lem:neg-def} with~\eqref{eq:hess-J}, we have proved
 Proposition~\ref{prop:hess-J}, and hence proved~\eqref{eq:main}.
 
 To complete the proof of Theorem~\ref{thm:main}, it remains to characterize
 the equality cases; for this, we will use Corollary~\ref{cor:linear-diffusion}:
 if $\E J(F) = J(\E F)$ then for every $t > 0$,
  \begin{align}
   0 &= \big((M \odot H_\Psi(P_t F^Q)) \otimes I_n\big)
   \begin{pmatrix} (\grad P_t f_1)^{Q_1}\\ \vdots \\ (\grad P_t f_k)^{Q_k} \end{pmatrix} \notag \\
   &= \big((\I(P_t F^Q) A(P_t F^Q) \I(P_t F^Q)) \otimes I_n\big)
   \begin{pmatrix} (\grad P_t f_1)^{Q_1}\\ \vdots \\ (\grad P_t f_k)^{Q_k} \end{pmatrix} \notag \\
   &= (\I(P_t F^Q)\otimes I_n) ( A(P_t F^Q)\otimes I_n)( \I(P_t F^Q)) \otimes I_n)
   \begin{pmatrix} (\grad P_t f_1)^{Q_1}\\ \vdots \\ (\grad P_t f_k)^{Q_k} \end{pmatrix},
   \label{eq:equality-1}
  \end{align}
  where the second equality follows from~\eqref{eq:hess-J}. Since
  $\I$ is always non-singular, we may drop the first instance of it
  from~\eqref{eq:equality-1}. Defining $w_i = \Phi^{-1} \circ f_i$, we have
  $\grad w_i = \grad f_i / I(f_i)$. By multiplying out the last two terms
  of~\ref{eq:equality-1}, we have
  \[
   0 = ( A(P_t F^Q)\otimes I_n)
   \begin{pmatrix} \frac{(\grad P_t f_1)^{Q_1}}{I(P_t f_1)^{Q_1}} \\ \vdots \\ \frac{(\grad P_t f_k)^{Q_k}}{I(P_t f_k)^{Q_k}} \end{pmatrix}
   = ( A(P_t F^Q)\otimes I_n)
    \begin{pmatrix} (\grad w_1)^{Q_1} \\ \vdots \\ (\grad w_k)^{Q_k} \end{pmatrix}
  \]
  Now, recall from the proof of Lemma~\ref{lem:neg-def}
  that the kernel of $A$ is $(1 \dots 1)^T$. It follows then that if
  \[
   (A \otimes I_n)
    \begin{pmatrix} (\grad w_1)^{Q_1} \\ \vdots \\ (\grad w_k)^{Q_k} \end{pmatrix}
    = 0
  \]
  then the $(\grad w_i)^{Q_i}$ are all equal. Since this holds pointwise, and
  since the distribution of $Z$ is fully supported on $\R^{kn}$, we see
  that $\grad w_i$ must all be almost surely equal to the same constant, $a$ say.
  Since each $w_i$ is a smooth function, we have
  $w_i(x) = a \cdot x + b_i$ for some $b_i$. Recalling the definition of $w_i$,
  we have $(P_t f_i)(x) = \Phi(a \cdot x + b_i)$. Carlen and
  Kerce~\cite{CarlenKerce:01} observed
  (and this observation was subsequently used in~\cite{MosselNeeman:12a} and~\cite{MosselNeeman:12b})
  that under this condition, and if $f_i = 1_{A_i}$, then $A_i$
  is a half-space and $a$ is normal to it. Since we have the same $a$ for every
  $A_i$, it follows that $A_1, \dots, A_k$ is a family of parallel half-spaces,
  which completes the proof of Theorem~\ref{thm:main}.
  
  In order to be more self-contained, let us sketch a proof
  (from~\cite{MosselNeeman:12b}) of why $(P_t 1_A)(x) = \Phi(a \cdot x + b_i)$ implies
  that $A$ is a half-space. First, one checks that if $A$ is a half-space
  and $\nu$ its outward unit normal, then
  \[
    (P_t 1_A)(x) = \Phi(k_t \nu \cdot x + b)
  \]
  for some $b \in \R$, where $k_t = (e^{2t} - 1)^{-1/2}$. Moreover, as
  $A$ ranges over all half-spaces with normal $\nu$ then $b$ ranges over $\R$.
  After checking that $P_t$ is one-to-one, this implies that if
  $(P_t 1_A)(x) = \Phi(a \cdot x + b)$ with $|a| = k_t$, then $A$ is a half-space
  with normal $a$. It remains to see what happens when $|a| \ne k_t$. First of
  all, Bakry and Ledoux showed that $|\grad (\Phi^{-1} \circ P_t f)| \le k_t$
  for any $f: \R^n \to [0, 1]$; hence $|a| \le k_t$. But if $|a| < k_t$ then
  there is some $s > 0$ with $|a| = k_{t + s}$. It follows from the previous
  argument, then, that there is a half-space $B$ with
  $P_{t+s} 1_B = \Phi(a \cdot x + b) = P_t 1_A$. We then have
  $P_s 1_B = 1_A$, which is a contradiction since $P_s 1_B$ is always a smooth
  function.

 \bibliographystyle{plain}
 \bibliography{all,../elchanan}

\end{document}